\newtheorem{thm}{Theorem}[section]
\newtheorem{prop}[thm]{Proposition}
\newtheorem{lem}[thm]{Lemma}
\newtheorem{cor}[thm]{Corollary}
\newtheorem{conj}[thm]{Conjecture}
\theoremstyle{definition}
\newtheorem{rem}[thm]{Remark}
\numberwithin{equation}{section}
\newcommand{\C}{\mathbb{C}}
\newcommand{\p}{\mathbb{P}}
\newcommand{\F}{\mathcal{F}}
\DeclareMathOperator{\Sym}{Sym}
\begin{document}

\title{On a Conjecture on the Variety of Lines on Fano Complete Intersections}

\author{Samir Canning}
\email{srcannin@ucsd.edu}
\maketitle
\begin{abstract}
    The Debarre-de Jong conjecture predicts that the Fano variety of lines on a smooth Fano hypersurface in $\p^n$ is always of the expected dimension. We generalize this conjecture to the case of smooth Fano complete intersections and prove that for a smooth Fano complete intersection $X\subset \p^n$ of hypersurfaces whose degrees sum to at most $7$, the Fano variety of lines on $X$ has the expected dimension. 
\end{abstract}

\section{Introduction}
Let $X$ be a \textit{general} degree $d$ hypersurface in $\p^n:=\p^n_{\C}$. It is well known that the Fano variety of lines on $X$, $\F(X)$, has dimension $2n-d-3$. For any smooth degree $d$ hypersurface in $\p^n$, we call this number the \textit{expected dimension} of the Fano variety of lines. The following conjecture of Debarre and de Jong predicts that the Fano variety of lines on a smooth Fano hypersurface always has the expected dimension.
\begin{conj}[Debarre, de Jong]
Let $X\subset \p^n$ be any smooth hypersurface of degree $d$. If $d\leq n$, then $\F(X)$ has the expected dimension.
\end{conj}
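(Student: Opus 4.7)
The plan is to control the dimension of $\F(X)$ through the normal bundle of a line. For any line $L\subset X$, the normal bundle exact sequence
\[
0\to N_{L/X}\to N_{L/\p^n}\to N_{X/\p^n}|_L\to 0
\]
together with $N_{L/\p^n}\cong\mathcal{O}_L(1)^{\oplus(n-1)}$ and $N_{X/\p^n}|_L\cong\mathcal{O}_L(d)$ gives $N_{L/X}\cong\bigoplus_{i=1}^{n-2}\mathcal{O}_L(a_i)$ with $\sum a_i=n-1-d$. The Zariski tangent space to $\F(X)$ at $[L]$ is $H^0(L,N_{L/X})$, which has dimension exactly the expected $2n-d-3$ as long as every $a_i\geq -1$. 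Thus if $\F(X)$ has a component of dimension larger than expected, every line of that component must have some $a_i\leq -2$; call such a line of \emph{second type}.

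First I would reduce the conjecture to bounding the locus of second type lines, by showing that first type lines cut out components of the correct dimension. Next I would translate the second type condition into concrete geometry: in affine coordinates adapted to $L$, the splitting of $N_{L/X}$ is read off from the vanishing orders along $L$ of the transverse linear part of the defining equation, and an index $a_i\leq -2$ forces unusually high order contact of $X$ with some plane through $L$. This stratifies $\F(X)$ by splitting type and lets one attack each stratum separately.

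The technical heart is then a parameter count on the incidence variety $\{(L,X):L\subset X,\ L\text{ of second type}\}$ mapping to the space of hypersurfaces, combined with a projection-type argument (for instance, projecting $X$ from a general point of $L$) that uses smoothness of $X$ and $d\leq n$ to bound each stratum of second-type lines on $X$ by $2n-d-4$. For small $d$ this works with essentially no effort, which is where the Debarre--de Jong proof for $d$ very small compared to $n$ comes from, and where the degree $\leq 7$ hypothesis for the main result of this paper ultimately enters.

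The main obstacle, and the reason the conjecture remains wide open, is that such bounds must hold \emph{uniformly} for every smooth $X$, not only the generic one: for a fixed splitting type the generic parameter count is straightforward, but one must rule out the possibility of a single pathological smooth $X$ carrying an entire component of lines sharing the same bad splitting. Extending any such bound to complete intersections --- the goal of the present paper --- requires replacing $\mathcal{O}_L(d)$ by $\bigoplus_j\mathcal{O}_L(d_j)$ throughout and carefully handling the interaction between the different defining equations when the individual degrees may be small but their sum is of intermediate size.
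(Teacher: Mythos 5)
The statement you are trying to prove is the Debarre--de Jong conjecture itself, which the paper presents only as a conjecture: it offers no proof, and for hypersurfaces it merely cites the known partial results (Collino for $d=4$, Debarre for $d=5$, Beheshti for $d\leq 8$, Beheshti--Riedl for $n\geq 2d-4$). Your proposal is likewise not a proof but a strategy outline, and the decisive step is missing. The setup is correct: writing $N_{L/X}=\bigoplus_{i=1}^{n-2}\mathcal{O}_L(a_i)$ with $\sum a_i=n-1-d$, one has $h^0(L,N_{L/X})=2n-d-3$ precisely when all $a_i\geq -1$, and since $T_{[L]}\F(X)=H^0(L,N_{L/X})$, any component of excess dimension must consist entirely of lines of second type. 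But the assertion that a projection argument plus a parameter count ``bounds each stratum of second-type lines on $X$ by $2n-d-4$'' is exactly the open content of the conjecture. An incidence-variety count over the space of hypersurfaces only controls the second-type locus for the \emph{general} member of each stratum; it gives no uniform bound valid for every smooth $X$, and, as you yourself concede, ruling out a single pathological smooth $X$ carrying an over-dimensional family of second-type lines is the unresolved difficulty. Since that step is asserted rather than established, the proposal does not prove the statement, and there is no proof in the paper against which it could be completed.

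A secondary inaccuracy: the degree hypothesis in this paper's main theorem does not enter through splitting-type parameter counts at all. The partial results actually proved here (complete intersections with $d-s\leq 5$, which together with Beheshti's hypersurface results give $d\leq 7$) follow a different route: reduction to the case $d=n$ by taking linear sections (Lemma \ref{n=d}), the observation that $X$ is not covered by lines when $d\geq n$ (Lemma \ref{cover}), and then the Segre--Rogora classification (Theorem \ref{Lines}) of $k$-folds carrying at least a $(2k-4)$-parameter family of lines, combined with the Lefschetz hyperplane theorem and Beheshti's non-uniruledness result (Theorem \ref{uniruled}) for families of lines sweeping out a divisor. So even as a description of ``where the degree $\leq 7$ hypothesis ultimately enters,'' your sketch does not match the paper's argument; the normal-bundle stratification you outline is the standard general attack on the conjecture, but it is neither what this paper uses nor, as written, a complete argument.
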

The conjecture for $d=3$ is classical. Collino \cite{Collino} proved the result for $d=4$. Debarre \cite{Debarre} proved the $d=5$ case. In \cite{Beheshti}, Beheshti proved the cases $d\leq 6$, and then with different techniques in \cite{Beheshti2}, she proved the cases $d\leq 8$. More recently, in \cite{BeheshtiRiedl}, Beheshti and Riedl proved the conjecture for any $n\geq 2d-4$ . 
\par The purpose of this paper is to state and investigate a generalization of the Debarre-de Jong conjecture for complete intersections. Let $X\subset\p^n$ be a complete intersection of $s$ hypersurfaces of degrees $d_1,\dots,d_s$, and set $d:=\sum d_i$. In Proposition \ref{expected}, we prove a well-known result that for a general such $X$, the dimension of $\F(X)$ is $2n-d-s-2$ if $2n-d-s-2\geq 0$ and $\F(X)$ is empty otherwise. It then seems reasonable to formulate a generalized version of the Debarre-de Jong conjecture.
\begin{conj}\label{conjecture}
Let $X$ be a smooth, Fano complete intersection of $s$ hypersurfaces of degrees $d_1,\dots,d_s$ with $\sum d_i=d$ in $\p^n$. Then $\F(X)$ has the expected dimension $2n-d-s-2$.
\end{conj}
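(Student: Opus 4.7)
The plan is to attack the conjecture by normal-bundle analysis, generalizing the strategy that has driven progress on the Debarre--de Jong conjecture itself. Let $\ell\subset X$ be a line on the smooth complete intersection $X=Z_1\cap\cdots\cap Z_s\subset\p^n$. The short exact sequence of normal bundles
\[ 0 \longrightarrow N_{\ell/X} \longrightarrow N_{\ell/\p^n} \longrightarrow \bigoplus_{i=1}^{s}\mathcal{O}_{\ell}(d_i) \longrightarrow 0 \]
shows that $N_{\ell/X}$ is a rank $n-1-s$ vector bundle on $\ell\cong\p^1$ of degree $n-1-d$, hence of Euler characteristic $2n-d-s-2$. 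By Proposition \ref{expected} this is precisely the expected dimension of $\F(X)$, so since the tangent space to $\F(X)$ at $[\ell]$ is $H^0(N_{\ell/X})$, any component of $\F(X)$ of excess dimension must consist entirely of lines for which $h^1(N_{\ell/X})>0$, equivalently lines whose Grothendieck splitting contains some $\mathcal{O}(a)$ with $a\leq -2$.

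The next step is to convert this cohomological obstruction into concrete projective geometry. A very negative summand of $N_{\ell/X}$ records higher than expected order of contact of $\ell$ with one of the hypersurfaces $Z_i$. I would then examine the subvariety $Y\subset X$ swept out by an excess component of $\F(X)$, and argue in the spirit of Collino, Debarre, and Beheshti: through a general point $y\in Y$ the family of bad lines should be forced to lie in a linear subspace of $\p^n$, and the Fano condition $d\leq n$ combined with smoothness of $X$ should prevent $X$ from containing linear subspaces of the size required. A contradiction with the existence of such a $Y$ then yields the conjecture.

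A natural route to the advertised $d\leq 7$ range is induction on $s$, with the base case $s=1$ furnished by Beheshti \cite{Beheshti,Beheshti2}. For $s\geq 2$ and $d\leq 7$, at least one $d_i$ is small, and one can try either (a) to specialize the corresponding $Z_i$ to a more tractable hypersurface and take a flat limit of $\F(X)$, or (b) to invert the perspective, working inside the smooth ambient $Z_i$ and redoing the normal bundle analysis for the remaining complete intersection cut out by $Z_1,\dots,\widehat{Z_i},\dots,Z_s$. The principal obstacle is that for $s\geq 2$ the possible splitting types of $N_{\ell/X}$ form a far richer stratification than in the hypersurface case: negativity contributed by one $d_i$ can be masked by positivity from another, so bad lines occur in many configurations indexed by which hypersurfaces are responsible. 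Controlling this case analysis while preserving smoothness and the Fano condition through any specialization is where I expect the main technical work to live.
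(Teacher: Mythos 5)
Your proposal is a research program, not a proof, and the statement it targets is in fact an open conjecture: the paper itself only establishes it for $d\leq 7$ (Theorem \ref{main} for $s\geq 2$ with $d-s\leq 5$, combined with Beheshti's hypersurface results). Within your sketch the decisive steps are asserted rather than proved. The opening normal-bundle computation is fine (it reproduces Proposition \ref{expected} and identifies excess components with lines having $h^1(N_{\ell/X})>0$), but everything after that is conditional: you do not show that through a general point of the swept locus the ``bad'' lines lie in a linear subspace, nor that smoothness plus the Fano condition excludes the linear spaces that would be required, and you yourself flag that the stratification by splitting types becomes unmanageable for $s\geq 2$. The proposed induction on $s$ via specialization also has a structural problem you do not resolve: semicontinuity controls $\dim\F$ only if the special fiber is itself a smooth Fano complete intersection with $\F$ of expected dimension, which is exactly the statement being proved; degenerating one $Z_i$ to something ``tractable'' typically destroys smoothness (or Fano-ness of the relevant intersection), and then $\F$ of the limit can be too large, so no contradiction is extracted. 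Alternative (b), working inside a smooth $Z_i$, is likewise not carried out.

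For comparison, the paper's actual argument for the cases it proves runs along entirely different lines. Lemma \ref{n=d} reduces to $d=n$ by cutting with a general linear space, so $X$ has small dimension $d-s\leq 5$; Lemma \ref{cover} shows $X$ is then not covered by lines, so the lines sweep out a proper subvariety $\Sigma$; the Segre--Rogora classification (Theorem \ref{Lines}) lists the possibilities for a $k$-fold carrying a family of lines of dimension close to $2k-2$ ($\p^k$, quadrics, families of linear spaces or of quadrics, linear sections of $G(1,4)$); and each possibility is eliminated either by the Lefschetz hyperplane theorem (a smooth complete intersection with $s\geq 2$, hence degree $\geq 4$, cannot contain such low-degree divisorial pieces) or by Beheshti's Theorem \ref{uniruled} (a divisor-sweeping family of lines cannot be uniruled, which kills the ``family of planes'' cases). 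None of this machinery appears in your outline, and your outline does not supply a substitute for it; as written, the proposal does not prove the conjecture in any range of degrees.
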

The Fano assumption in Conjecture \ref{conjecture} is necessary. For example, it is well known that the Fano variety of lines on a Fermat hypersurface of degree $d$ in $\p^n$ is of dimension $n-3$, which is larger than expected when $d\geq n$. Moreover, there are $K3$ surfaces containing lines that are complete intersections of Fano hypersurfaces. See, for example, \cite[Chapter 10]{Dolgachev} for the case of an intersection three quadrics in $\p^5$.
\par The main result of this paper proves Conjecture \ref{conjecture} for some low values of $d$ analogously to Beheshti's work in \cite{Beheshti}.
\begin{thm}\label{main}
Let $X$ be a smooth, Fano complete intersection of $s\geq 2$ hypersurfaces of degrees $d_1,\dots,d_s$ in $\p^n$ with $\sum d_i=d$. If $d-s\leq 5$, then $\F(X)$ has the expected dimension $2n-d-s-2$.
\end{thm}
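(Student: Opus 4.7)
The plan is to proceed by contradiction, adapting Beheshti's hypersurface argument \cite{Beheshti}. Suppose $\F(X)$ admits an irreducible component $\Sigma$ with $\dim\Sigma>2n-d-s-2$. For a general $[\ell]\in\Sigma$, the standard normal bundle sequence
\[
0 \to N_{\ell/X} \to N_{\ell/\p^n} \to \bigoplus_{i=1}^{s}\mathcal{O}_\ell(d_i)\to 0,
\]
together with $N_{\ell/\p^n}\cong\mathcal{O}_\ell(1)^{\oplus(n-1)}$, shows that $N_{\ell/X}\cong\bigoplus_{j=1}^{n-1-s}\mathcal{O}_\ell(a_j)$ with $\sum_j a_j=n-1-d$. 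Since $\dim_{[\ell]}\Sigma\leq h^0(N_{\ell/X})=\sum_j\max(a_j+1,0)$ and $2n-d-s-2=(n-1-d)+(n-1-s)$, the excess-dimension hypothesis forces at least one $a_j\leq -2$; that is, $N_{\ell/X}$ is badly unbalanced.

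Next, I would globalize this failure of global generation through the universal line. Let $I=\{(\ell,x):x\in\ell\}\subset\Sigma\times X$ with evaluation map $\mathrm{ev}:I\to X$ and set $Y=\overline{\mathrm{ev}(I)}$; through a general point of $Y$ passes a $(\dim\Sigma-\dim Y+1)$-dimensional subfamily of $\Sigma$. Following Beheshti, project $X$ away from a general line $\ell\in\Sigma$ onto a $\p^{n-2}$. Each defining form of $X$ vanishes to some order along $\ell$ and thus restricts to a system of forms on $\p^{n-2}$ whose degrees are bounded by the $d_i$. Lines of $\Sigma$ meeting $\ell$ correspond to points on an auxiliary variety in $\p^{n-2}$ whose dimension can be estimated in terms of the individual degrees together with the smoothness of $X$.

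A case analysis on $\dim Y$ (especially $Y=X$ versus $Y\subsetneq X$) and on the possible partitions $(d_1,\ldots,d_s)$ constrained by $d-s\leq 5$ should then produce a numerical contradiction with the conclusion of step~1. The main obstacle, relative to the hypersurface case, is that the total degree $d$ is now split across $s\geq 2$ equations, so the case analysis must be carried out partition by partition. Low-degree factors such as quadrics generically contain very large families of lines, so any naive reduction to a single hypersurface fails: one must use the full complete-intersection structure to exclude spurious families. I expect the heavy lifting to concentrate in the borderline cases $d-s=4$ and $d-s=5$, which most closely mirror Beheshti's original numerical analysis.
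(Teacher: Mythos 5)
Your step 1 is correct as far as it goes: the splitting $N_{\ell/X}\cong\oplus\mathcal{O}_\ell(a_j)$ with $\sum a_j=n-1-d$, together with $\dim_{[\ell]}\Sigma\leq h^0(N_{\ell/X})$, does show that an excess-dimensional component forces some $a_j\leq -2$ at a general line. But from that point on the proposal is a strategy outline rather than a proof, and the gap is precisely where the theorem lives. The projection from $\ell$ to $\p^{n-2}$, the ``auxiliary variety'' whose points parametrize lines of $\Sigma$ meeting $\ell$, the dimension estimate for it, and the partition-by-partition case analysis are all asserted to ``produce a numerical contradiction'' without any of them being carried out; in particular you never derive a contradiction, and you yourself flag (correctly) that quadric factors, which are unavoidable when $s\geq 2$ and $d-s\leq 5$, are exactly where a naive adaptation of Beheshti's hypersurface estimates breaks down. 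Nothing in the proposal explains how the complete-intersection structure overcomes this, so the essential content of the theorem is missing.

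For comparison, the paper takes a different and much more structured route, closer to Beheshti's first paper than to the projection/normal-bundle techniques you sketch. It first reduces to $d=n$ by cutting with a general linear space $\Lambda\cong\p^d$ through a given line (Lemma \ref{n=d}), then shows that for $d\geq n$ the variety $\Sigma\subset X$ swept out by lines is a proper subvariety (Lemma \ref{cover}, via global generation of $N_{\ell/X}$ for a general line of a covering family). Since after reduction $\dim X\leq 5$, $\Sigma$ has dimension at most $4$, and the Segre--Rogora classification of $k$-folds carrying families of lines of dimension $2k-2$, $2k-3$, $2k-4$ pins down the only possible excess configurations: $\p^k$'s, quadrics, linear sections of $G(1,4)$, or families of linear spaces. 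The first three are excluded inside a smooth complete intersection by the Lefschetz hyperplane theorem, and the families-of-planes cases are excluded because they would give a uniruled divisorial component of $\F(X)$, contradicting Beheshti's non-uniruledness theorem (Theorem \ref{uniruled}). If you want to salvage your approach, you would need to actually prove the dimension estimate for lines meeting a general $\ell$ in the presence of low-degree (especially quadric) factors, which is the step your plan leaves open; alternatively, the classification-plus-Lefschetz route avoids that difficulty entirely.
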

Combined with Beheshti's result, Theorem \ref{main} implies that Conjecture \ref{conjecture} is true for low values of $d$. 
\begin{cor}
Conjecture \ref{conjecture} is true if $d\leq 7$.
\end{cor}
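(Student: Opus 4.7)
The plan is to reduce the corollary to the combination of Theorem \ref{main} with Beheshti's proof of the hypersurface case, and then verify that the arithmetic ranges of these two results together cover every Fano complete intersection with $d\leq 7$. First I would split on the number $s$ of defining hypersurfaces. If $s=1$, then $X$ is a smooth Fano hypersurface of degree $d\leq 7\leq 8$, and this is exactly the range proved by Beheshti in \cite{Beheshti2}, so $\F(X)$ has the expected dimension immediately.

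If instead $s\geq 2$, then $d\leq 7$ forces $d-s\leq 7-2=5$, which is precisely the hypothesis of Theorem \ref{main}, and hence $\F(X)$ again has the expected dimension $2n-d-s-2$. There is no serious obstacle in the argument: the corollary is essentially a numerology check showing that the two ranges $d\leq 8$ for hypersurfaces and $d-s\leq 5$ for $s\geq 2$ together exhaust the $d\leq 7$ regime. The only bookkeeping worth flagging is that linear factors $d_i=1$ are permitted by both statements, so no preliminary projection away from a hyperplane component is needed before invoking either result.
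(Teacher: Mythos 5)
Your proposal is correct and matches the paper's intended argument: the corollary is exactly the observation that Beheshti's hypersurface result (covering $s=1$, $d\leq 7\leq 8$) together with Theorem \ref{main} (covering $s\geq 2$, where $d\leq 7$ forces $d-s\leq 5$) exhausts all cases with $d\leq 7$. The paper gives no further proof beyond this combination, so your case split and numerology check are precisely what is needed.
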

In addition, it resolves the conjecture for some smooth, Fano complete intersections where the sum of the degrees of the hypersurfaces is $d\leq 10$. Let $V(d_1,\dots, d_s)$ denote any smooth, Fano complete intersection of $s$ hypersurfaces with $\sum d_i=d$.  For $8\leq d \leq 10$, Theorem \ref{main} implies that Conjecture \ref{conjecture} is true for $V(2,2,2,2)$, $V(2,2,4)$, $V(2,3,3)$, $V(2,2,2,3)$, and $V(2,2,2,2,2)$.

\noindent\textbf{Acknowledgments}.
I would like to thank my advisor Elham Izadi and David Stapleton for the many helpful conversations. This work was partially supported by NSF grant DMS-1502651.


\section{The Expected Dimension}
In this section, we give a proof of a well-known result on the expected dimension of the Fano variety of lines of a smooth complete intersection. 
\begin{prop}\label{expected}
Let $X\subset \p^n$ be a smooth complete intersection of $s$ hypersurfaces of degrees $d_1,\dots,d_s$ with $\sum d_i=d$. Then $\dim \F(X)\geq 2n-d-s-2$. If $X$ is general, then $\dim \F(X)=2n-d-s-2$ if $d\leq 2n-s-2$, and $\F(X)$ is empty otherwise.
\end{prop}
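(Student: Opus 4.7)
The plan is to realize $\F(X)$ as the zero locus of a section of an explicit vector bundle on the Grassmannian $\mathbb{G}:=\mathbb{G}(1,n)$ of lines in $\p^n$, which gives the lower bound unconditionally, and then to run an incidence-correspondence/dimension-count argument on the parameter space of all such complete intersections to pin down the generic behavior.

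Concretely, let $S\subset \mathcal{O}_{\mathbb{G}}^{\oplus(n+1)}$ denote the tautological rank-$2$ subbundle on $\mathbb{G}$, so that for a line $\ell\in\mathbb{G}$ one has $H^0(\ell,\mathcal{O}_\ell(d_i))\cong \Sym^{d_i}(S^*)|_\ell$. For each hypersurface $\{f_i=0\}$ cutting out $X$, restriction to lines defines a section $\sigma_i$ of the rank-$(d_i+1)$ bundle $E_i:=\Sym^{d_i}(S^*)$, whose vanishing locus is the scheme of lines in $\{f_i=0\}$. Taking the direct sum $E:=\bigoplus_{i=1}^{s} E_i$, which has rank $\sum(d_i+1)=d+s$, the section $\sigma:=(\sigma_1,\dots,\sigma_s)$ vanishes precisely along $\F(X)$. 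Since $\dim \mathbb{G}=2n-2$, every component of the zero locus of a section of a rank-$(d+s)$ bundle has dimension at least $(2n-2)-(d+s)=2n-d-s-2$, which gives the unconditional lower bound.

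For the generic statement, I would form the incidence
\begin{equation*}
\mathcal{I}\;:=\;\bigl\{(\ell,(f_1,\dots,f_s))\;:\;\ell\subset V(f_1,\dots,f_s)\bigr\}\;\subset\;\mathbb{G}\times W,
\end{equation*}
where $W:=\bigoplus_{i=1}^s H^0(\p^n,\mathcal{O}(d_i))$. Projecting to $\mathbb{G}$, the fiber over $\ell$ is the linear subspace of tuples vanishing on $\ell$, which is cut out in $W$ by the surjective restriction maps $H^0(\p^n,\mathcal{O}(d_i))\to H^0(\ell,\mathcal{O}_\ell(d_i))$; hence each fiber has codimension exactly $\sum(d_i+1)=d+s$ in $W$, and $\mathcal{I}$ is irreducible of dimension $\dim W+(2n-2)-(d+s)$. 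I then project $\mathcal{I}\to W$: by generic fiber dimension, the fiber over a general point of the image has dimension $\dim\mathcal{I}-\dim(\mathrm{image})$. Combining with semicontinuity and the lower bound from the previous paragraph, this forces the generic fiber to have dimension exactly $2n-d-s-2$ when this number is nonnegative, and shows that the second projection fails to be dominant when $d+s>2n-2$, so that for general $(f_1,\dots,f_s)$ the variety $\F(X)$ is empty.

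The only subtlety is checking that the general member of $W$ can simultaneously be taken to be smooth and to represent a complete intersection cut out precisely by the prescribed degrees; this follows from the fact that the smooth complete intersection locus is a Zariski open subset of $W$ whose nonemptiness is standard (Bertini for the very ample linear systems $|\mathcal{O}_{\p^n}(d_i)|$ applied successively). The main obstacle, such as it is, is keeping the fiber-dimension bookkeeping straight and verifying the surjectivity of the restriction maps $H^0(\p^n,\mathcal{O}(d_i))\to H^0(\ell,\mathcal{O}_\ell(d_i))$, which is immediate from the fact that every polynomial of degree $d_i$ on a line extends to $\p^n$.
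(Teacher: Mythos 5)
Your lower bound is fine: realizing $\F(X)$ as the zero locus of a section of the rank-$(d+s)$ bundle $\bigoplus_i\Sym^{d_i}(S^*)$ on the $(2n-2)$-dimensional Grassmannian is a correct (and standard) way to get it, and your incidence correspondence and the computation that it is irreducible of dimension $\dim W+(2n-2)-(d+s)$ are exactly what the paper does. The genuine gap is in the step where you say that ``semicontinuity and the lower bound'' force the general fiber of $\mathcal{I}\to W$ to have dimension exactly $2n-d-s-2$ when this number is nonnegative. That combination cannot rule out the scenario in which the projection to $W$ fails to be dominant: a priori its image could be a proper closed subvariety of $W$, with fibers over points of the image of dimension strictly larger than $2n-d-s-2$, while the fiber over a general point of $W$ is empty. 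The vector-bundle lower bound is vacuous for empty fibers, and upper semicontinuity of fiber dimension never produces nonemptiness, so nothing in your argument shows that a general complete intersection of the given multidegree contains a line at all --- and that is precisely the nontrivial content of the case $d\leq 2n-s-2$. (Your nondominance argument when $d+s>2n-2$, and the Bertini remark about smoothness, are fine.)

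What is missing is a positive argument for dominance (equivalently surjectivity, since the projection is proper). The paper supplies it by a tangent-space computation: since the incidence variety is smooth, it suffices to show the differential of $p_1$ is surjective at one point; the kernel of $T_{(\vec f,l)}I\to T_{\vec f}\Sym^{\vec d}(V^*)$ is $H^0(l,N_{l/X})$, and using the sequence $0\to N_{l/X}\to\mathcal{O}_l(1)^{n-1}\to\oplus_{i}\mathcal{O}_l(d_i)\to 0$ one checks that for a general $\vec f$ containing a fixed line $l$ the induced map on global sections is surjective, so $h^0(l,N_{l/X})=2n-d-s-2$; this is the minimal possible kernel dimension, which forces surjectivity of the differential and hence dominance. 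Equivalently, you could exhibit a single pair $(\vec f_0,l_0)$ at which the local fiber dimension of the projection to $W$ equals $2n-d-s-2$: since $\dim_x p_1^{-1}(p_1(x))\geq\dim\mathcal{I}-\dim\overline{p_1(\mathcal{I})}$ for every $x$, such a point forces the image to have full dimension. Some computation of this kind must be added; once it is, your bookkeeping does yield the proposition.
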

\begin{proof}
Let $V$ be an $(n+1)$-dimensional vector space. Define $\Sym^{\vec{d}}(V^*):=\oplus_{i=1}^{s} \Sym^{d_i}(V^*)$. A form $\vec{f}=(f_1,\dots,f_s)\in \Sym^{\vec{d}}(V^*)$ defines an intersection $X=V(\vec{f})\subset \p^n$ of hypersurfaces of degrees $d_1,\dots, d_s$. Let $G(1,n)$ denote the Grassmannian of projective lines in $\p^n$, and let $I$ be the incidence correspondence defined by
\[
I:=\{(\vec{f},l):l\subset V(\vec{f})\}\subset \Sym^{\vec{d}}(V^*)\times G(1,n)
\]
Let $p_1:I\rightarrow \Sym^{\vec{d}}(V^*)$ and $p_2:I\rightarrow G(1,n)$ be the restrictions of the projection maps to $I$. The fiber $p_1^{-1}(\vec f)$ is naturally identified with $\F(V(\vec{f}))$. For a point $[l]\in G(1,n)$, let $W\subset V$ denote the subspace such that $\p(W)=l$. Then there is a surjection $\Sym^{\vec{d}}(V^*)\rightarrow \Sym^{\vec{d}}(W^*)$ whose kernel corresponds to intersections of hypersurfaces of degrees $d_1,\dots,d_s$ containing $l$. Hence $p_2^{-1}(l)$ is naturally identified with a linear subspace of $\Sym^{\vec{d}}(V^*)$ whose codimension is $\dim\Sym^{\vec{d}}(W^*)=d+s$. It follows that $I$ is smooth and irreducible of codimension $d+s$.
\par Suppose $d>2n-s-2$. Then $\dim I<\dim \Sym^{\vec{d}}(V^*)$, so, for a general choice of $\vec{f}$, $p_1^{-1}(\vec{f})=\F(V(\vec{f}))$ is empty.
\par On the other hand, let $d\leq 2n-s-2$. To prove the proposition, it suffices to show that $p_1$ is surjective. Because we showed $I$ is smooth, it suffices to show that the map induced by $p_1$ on Zariski tangent spaces is surjective at a point. Let $l\in G(1,n)$ be any line. Note that the kernel of the map on tangent spaces $T_{(\vec{f},l)}I\rightarrow T_{\vec{f}}\Sym^{\vec{d}}(V^*)$ is $T_l \F(X)\cong H^0(l,N_{l/X})$. Thus, to show surjectivity of $T_{(\vec{f},l)}I\rightarrow T_{\vec{f}}\Sym^{\vec{d}}(V^*)$, it suffices to show that $h^0(l,N_{l/X})=2n-d-s-2$. We choose coordinates on $\p^n$ so that $l$ is given by $x_2=\dots=x_n=0$. The condition that $l\subset X=V(\vec{f})$ is that for $1\leq i\leq s$, we can write
\[
f_i=x_{2}f_{i2}+\cdots x_{n}f_{in}
\]
We then have the exact sequence
\[
0\rightarrow N_{l/X}\rightarrow \mathcal{O}_{l}(1)^{n-1}\rightarrow \oplus_{i=1}^{s}\mathcal{O}_{l}(d_i)\rightarrow 0
\]
The map $\mathcal{O}_{l}(1)^{n-1}\rightarrow \mathcal{O}_{l}(d_i)$ is given by the matrix $(f_{i2},\dots,f_{in})$. For a general choice of $\vec{f}$, the induced map
\[
H^0(l,\mathcal{O}_l(1)^{n-1})\rightarrow H^0(l,\oplus_{i=1}^{s}\mathcal{O}_{l}(d_i))
\] is given by a matrix of full rank, so it is surjective. By the long exact sequence in cohomology, we have 
\begin{equation*}
\begin{split}
    h^0(l,N_{l/X})&=h^0(l,\mathcal{O}_l(1)^{n-1})-h^0(l,\oplus_{i=1}^{s}\mathcal{O}_l(d_i))\\
    &= 2(n-1)-\sum_{i=1}^{s}(1+d_i)\\
    &=2n-d-s-2.
    \end{split}
\end{equation*}
\end{proof}
\section{Reduction to $d=n$}
In this section, we generalize some lemmas from \cite{Beheshti} to the complete intersection setting. We first show that it suffices to prove Conjecture \ref{conjecture} for $d=n$.
\begin{lem}\label{n=d}
If the dimension of $\F(Y)$ is the expected dimension for every smooth complete intersection $Y$ of $s$ hypersurfaces of degrees $d_1,\dots,d_s$ with $\sum d_i=d$ in $\p^d$, then the dimension of $\F(X)$ is the expected dimension for all smooth complete intersections $X$ of hypersurfaces of degrees $d_1,\dots,d_s$ with $\sum d_i=d$ in $\p^n$ for any $n\geq d$.
\end{lem}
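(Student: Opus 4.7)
My plan is to induct on $n - d \geq 0$, taking the base case $n = d$ as the hypothesis. For the inductive step, assume $n \geq d + 1$ and that the conclusion holds for $\p^{n-1}$, and let $X \subset \p^n$ be a smooth complete intersection of type $(d_1, \ldots, d_s)$.

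By Bertini's theorem, there is a dense open $U \subset (\p^n)^*$ such that for each $H \in U$, $Y_H := X \cap H$ is a smooth complete intersection of the same type in $H \cong \p^{n-1}$. Since $n - 1 \geq d$, each $Y_H$ is Fano, and the inductive hypothesis gives $\dim \F(Y_H) = 2(n-1) - d - s - 2 = 2n - d - s - 4$.

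Next I would form the incidence correspondence
\[
\mathcal{I} = \{(l, H) \in \F(X) \times (\p^n)^* : l \subset H\},
\]
and exploit its two projections. The first, $\pi_1 : \mathcal{I} \to \F(X)$, has every fiber isomorphic to $\p^{n-2}$ (hyperplanes through a fixed line), so $\dim \mathcal{I} = \dim \F(X) + (n - 2)$. The second, $\pi_2 : \mathcal{I} \to (\p^n)^*$, has fiber $\F(X \cap H)$ over $H$. The key step is to show $\pi_2$ is dominant: assuming $2n - d - s - 4 \geq 0$, Proposition~\ref{expected} applied in $\p^{n-1}$ ensures $\F(Y_H) \neq \emptyset$ for every $H \in U$, so $U \subset \pi_2(\mathcal{I})$. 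Combining dominance with the generic fiber computation yields
\[
\dim \F(X) + (n - 2) = \dim \mathcal{I} = n + (2n - d - s - 4),
\]
so $\dim \F(X) = 2n - d - s - 2$, which matches the lower bound already supplied by Proposition~\ref{expected}.

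The main obstacle is the edge case $2n - d - s - 4 < 0$, in which $\F(Y_H) = \emptyset$ for smooth $Y_H$ and the dominance argument breaks down. Combining that inequality with $n \geq d + 1$ and $d \geq s$ (since every $d_i \geq 1$), a short check forces $n = d + 1$ together with the type $(d_1, \ldots, d_s)$ being either $(1, \ldots, 1)$ or a permutation of $(2, 1, \ldots, 1)$. In the first case $X$ is cut out of $\p^{d+1}$ by $d$ hyperplanes, hence a line, and $\F(X)$ is a single point of the expected dimension $0$. In the second case $X$ is contained in a linearly embedded $\p^3 \subset \p^n$ and is a smooth quadric surface there, whose Fano variety of lines is the classical disjoint union of two $\p^1$-rulings, of the expected dimension $1$. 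These classical computations handle the remaining instances and complete the induction.
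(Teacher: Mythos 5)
Your incidence construction is fine, but the key dimension count is where the argument breaks. Since $\pi_1$ makes $\mathcal{I}$ a $\p^{n-2}$-bundle over $\F(X)$, the irreducible components of $\mathcal{I}$ are exactly the preimages $\mathcal{I}_Z$ of the irreducible components $Z$ of $\F(X)$, and the theorem on fibre dimension only gives $\dim \mathcal{I}_Z=\dim\overline{\pi_2(\mathcal{I}_Z)}+\dim(\text{generic fibre of }\pi_2|_{\mathcal{I}_Z})$ component by component. Your dominance statement $U\subset\pi_2(\mathcal{I})$, deduced from $\F(Y_H)\neq\emptyset$, only shows that \emph{some} component of $\mathcal{I}$ dominates $(\p^n)^*$, not that a component of maximal dimension does. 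The component you actually need to control --- a hypothetical $Z\subset\F(X)$ of excess dimension --- could have $\pi_2(\mathcal{I}_Z)$ contained in the discriminant $(\p^n)^*\setminus U$, i.e.\ every hyperplane containing a line of $Z$ might cut $X$ in a \emph{singular} variety. Over such hyperplanes the inductive hypothesis says nothing, and fibres there can genuinely be large (singular hyperplane sections, e.g.\ cones, carry big families of lines), so the asserted equality $\dim\mathcal{I}=n+(2n-d-s-4)$, and hence the upper bound on $\dim\F(X)$, does not follow.

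The missing ingredient is a per-component (indeed pointwise) statement: for a line $l$ in an arbitrary component of $\F(X)$ one must exhibit a linear section through $l$ that is still smooth. This requires a Bertini-with-base-locus argument: for general $H\supset l$ one has $\mathrm{Sing}(X\cap H)\subset l$, and $X\cap H$ is singular at $p\in l$ exactly when $H$ contains the embedded tangent space of $X$ at $p$; these bad hyperplanes sweep out at most an $s$-dimensional family inside the $\p^{n-2}$ of hyperplanes through $l$, so for $\dim X=n-s\geq 3$ a general $H\supset l$ lies in $U$ (small-dimensional cases would need separate treatment). With that statement your induction closes, since then every $\overline{\pi_2(\mathcal{I}_Z)}$ meets $U$ and the generic fibre of $\pi_2|_{\mathcal{I}_Z}$ sits inside $\F(Y_H)$ for a smooth $Y_H$. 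This is precisely the step the paper's proof supplies, and it does so more directly and without induction: given $l$, it chooses by such a Bertini-type argument a linear subspace $\Lambda\cong\p^d$ with $l\subset\Lambda$ and $Y=X\cap\Lambda$ smooth, and then bounds $\dim_{[l]}\F(X)\leq\dim_{[l]}\F(Y)+2(n-d)=2n-d-s-2$ by intersecting $\F(X)$ with the Schubert cycle of lines contained in $\Lambda$, the lower bound again coming from Proposition~\ref{expected}. Your handling of the edge cases $2n-d-s-4<0$ is correct but secondary to this gap.
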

\begin{proof}
Fix an $X$ as in the statement of the lemma. Let $l\subset X$ be a line. By a Bertini type argument, we can choose a linear subspace $\Lambda\cong \p^d$ such that $l\subset \Lambda$ and $\Lambda\cap X=Y$ is smooth. We have that $Y$ is a smooth complete intersection of $s$ hypersurfaces of degrees $(d_1,\dots,d_s)$ in $\Lambda$. Then $\F(Y)$ is the intersection of $\F(X)$ with a $2(n-d)$-dimensional Schubert cycle parameterizing lines in $\p^n$ contained in $\Lambda$. We obtain that
\[
d-s-2=\dim_{[l]}\F(Y)\geq \dim_{[l]}\F(X)-2(n-d).
\]
Therefore, $\dim_{[l]}\F(X)\leq 2n-d-s-2$, as desired.
\end{proof}
\begin{rem}
The proof of this lemma tells us that to prove the conjecture for a given $X$, it suffices to prove the conjecture for a general linear section of $X$. 
\end{rem}
We also need a lemma that gives an upper bound on the dimension of the variety swept out by lines on a Fano complete intersection.
\begin{lem}\label{cover}
Let $X$ be a smooth complete intersection of $s$ hypersurfaces of degrees $d_1,\dots,d_s$ with $\sum d_i=d$ in $\p^n$. If $d\geq n$, then $X$ is not covered by lines.
\end{lem}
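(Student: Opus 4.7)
The plan is to argue by contradiction using the Grothendieck splitting of $N_{l/X}$ for a general line $l$ in a covering family. Suppose $X$ is covered by lines, so there is an irreducible component $T \subset \F(X)$ whose associated universal line $\mathcal{U}_T \to X$ is dominant. For a general pair $([l], p) \in \mathcal{U}_T$, the differential of $\mathcal{U}_T \to X$ surjects onto $T_pX$. Decomposing $T_pX = T_pl \oplus N_{l/X}|_p$, the tangent direction along $l$ already accounts for the first summand, so the evaluation map $H^0(l, N_{l/X}) \to N_{l/X}|_p$ must be surjective for general $p \in l$.

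Next I would write $N_{l/X} = \bigoplus_{j=1}^{n-s-1} \mathcal{O}_l(a_j)$ and recall that such a direct sum on $\p^1$ is globally generated (equivalently, generated at a single point) precisely when every $a_j \geq 0$. The preceding step therefore forces all $a_j \geq 0$, hence $\deg N_{l/X} = \sum_j a_j \geq 0$. On the other hand, from the exact sequence
\[
0 \to N_{l/X} \to \mathcal{O}_l(1)^{n-1} \to \bigoplus_{i=1}^s \mathcal{O}_l(d_i) \to 0
\]
established in the proof of Proposition \ref{expected}, one reads off $\deg N_{l/X} = (n-1)-d$. Combining the two yields $d \leq n-1$, contradicting the hypothesis $d \geq n$.

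The main point to verify carefully is the opening step: one must check that the deformations of $[l]$ within $T$ really account for all of $N_{l/X}|_p$ at a general $p \in l$. This follows from generic smoothness applied to the dominant map $\mathcal{U}_T \to X$: at a general point the differential is surjective, and after subtracting off the contribution from moving $p$ along $l$, the image of $T_{[l]}T \subset H^0(l, N_{l/X})$ under evaluation at $p$ must fill $N_{l/X}|_p$. This is the only place where the covering hypothesis enters; everything else reduces to a pure degree count on $\p^1$.
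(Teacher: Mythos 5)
Your argument is correct and is essentially the paper's proof: both show that the covering hypothesis forces the evaluation map $H^0(l,N_{l/X})\to N_{l/X}\otimes\kappa(p)$ to be surjective at a general point (the paper via the incidence correspondence $I\subset X\times\F(X)$ and a commutative square, you via generic smoothness of the universal family), hence $N_{l/X}=\oplus\mathcal{O}_l(a_j)$ with all $a_j\geq 0$, and then the same degree count $\sum a_j=n-1-d\geq 0$ gives the contradiction. The only cosmetic caveat is that $N_{l/X}|_p$ is a quotient of $T_pX$ rather than a canonical direct summand, but your argument only uses the quotient map, so nothing is affected.
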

\begin{proof}
We suppose that $X$ is covered by lines and prove that in this case, $d\leq n-1$. It suffices to show that the normal bundle to a general line on $X$ is globally generated. Indeed, because $N_{l/X}$ is a vector bundle on $\p^1$, it splits as a direct sum of line bundles,
\[
N_{l/X}=\mathcal{O}(a_1)\oplus\dots\oplus\mathcal{O}(a_{n-s-1}).
\]
Further, each $a_i$ is nonnegative because $N_{l/X}$ is globally generated. From the exact sequence
\[
0\rightarrow N_{l/X}\rightarrow N_{l/\p^n}\rightarrow N_{X/\p^n}|_{l}\rightarrow 0,
\]
we see that $n-1-d=a_1+\dots+a_{n-1-s}\geq 0$. Therefore, $d\leq n-1$.
\par To prove that $N_{l/X}$ is globally generated, define an incidence correspondence $I$ by 
\[
I:=\{(p,l):p\in l\}\subset X\times \F(X).
\]
Let $p_1:I\rightarrow X$ be the restriction of the first projection map, and $p_2:I\rightarrow \F(X)$ the restriction of the second projection. Let $(p,l)$ be a general point of $I$. Because $T_{l}\F(X)=H^0(l,N_{l/X})$, we have the diagram
\[
\begin{CD}
   T_{(p,l)}I  @>dp_{2}>>  H^0(l,N_{l/X})\\
@VV{dp_{1}}V        @VV\beta V\\
    T_{p}X @>\alpha >>  N_{l/X}\otimes\kappa(p)
\end{CD}
\]
The map $\alpha$ comes from the tangent bundle sequence for $l\subset X$.
The map $dp_1$ is surjective because $X$ is covered by lines. Hence, the image of $\alpha\circ dp_1$ is $(n-s-1)$-dimensional. By the commutativity of the diagram, we see that the image of $\beta$ is $(n-s-1)$-dimensional, which implies $N_{l/X}$ is globally generated.
\par 

\end{proof}
\begin{section}{Proof of the Main Theorem}
Our main tools will be the Lefschetz hyperplane theorem, the following result of Beheshti \cite[Theorem 2.1]{Beheshti} and a theorem classifying varieties with many lines, one part of which is due to Segre \cite{Segre} and the other due to Rogora \cite[Theorem 2]{Rogora}.
\begin{thm}[Beheshti]\label{uniruled}
Let $X$ be a smooth complete intersection of $s$ hypersurfaces of degrees $d_1,\dots,d_s$ with $\sum d_i=d$. Let $Y$ be an irreducible subvariety of $\F(X)$ such that the lines on $X$ corresponding to points on $Y$ sweep out a divisor. If $d\geq n-1$, then $Y$ is not uniruled.
\end{thm}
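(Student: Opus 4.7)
The plan is to argue by contradiction: assume $Y$ is uniruled, produce from this a one-parameter family of lines on $X$ whose total space is a ruled surface, and then derive a numerical contradiction from the Fano-type hypothesis $d \geq n-1$.

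Concretely, if $Y$ is uniruled then through a general point $[l] \in Y$ there is a nonconstant morphism $\varphi \colon \p^1 \to Y$. Pulling back the universal family of lines over $\F(X)$ along $\varphi$ yields a $\p^1$-bundle $\pi \colon S \to \p^1$ together with an evaluation morphism $f \colon S \to X$. Because the lines parameterized by $Y$ sweep out only a divisor $D \subset X$, the image $T := f(S)$ is contained in $D$; and for a generic choice of $\varphi$ through a general point of $Y$, a dimension count shows $T$ is a surface and $f \colon S \to T$ is generically finite.

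The heart of the argument is then a normal bundle computation along a general fiber $F$ of $\pi$, which maps isomorphically to a line $l \subset X$. On $S$ one has $N_{F/S} \cong \mathcal{O}_{\p^1}$, and the tangent map $df$ pushes this forward to a degree-zero sub-line-bundle of $N_{l/X}$, reflecting the fact that $l$ deforms inside the surface $T$. Combining with the standard normal bundle sequence
\[
0 \to N_{l/X} \to \mathcal{O}_{\p^1}(1)^{\oplus(n-1)} \to \bigoplus_{i=1}^{s} \mathcal{O}_{\p^1}(d_i) \to 0
\]
gives $\deg N_{l/X} = n-1-d \leq 0$ under the hypothesis $d \geq n-1$, so the global section produced by the deformation of $l$ inside $S$ is forced into a very constrained piece of the splitting of $N_{l/X}$. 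Equivalently, the sub-bundle $N_{l/T} \cong \mathcal{O}_{\p^1}$ occupies essentially all of the non-negative summands of $N_{l/X}$.

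The final contradiction is obtained by observing that such a splitting type forces the deformations of $l$ in $X$ to propagate beyond the surface $T$: the existence of a nowhere-vanishing section of $N_{l/X}$ of the required type, together with the freeness of the rational curve $\varphi(\p^1) \subset Y$, lets one deform the ruled surface $S$ in enough directions inside $\F(X)$ to sweep out more than a divisor, contradicting the hypothesis on $Y$. The main obstacle I expect is precisely this last step: bookkeeping the allowed splitting types of $N_{l/X}$ on $\p^1$ and showing that every one compatible with $d \geq n-1$ and with a degree-zero sub-line-bundle coming from a surface deformation forces extra mobility of $l$ in $X$. This is most naturally carried out by analyzing the tangent-obstruction sequence for the universal line over $\F(X)$ restricted to $\varphi(\p^1)$, and using the irreducibility of $Y$ together with the assumption that the lines parameterized by $Y$ cover only a divisor.
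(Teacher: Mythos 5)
A preliminary remark: the paper does not prove this statement at all. It is quoted, in complete-intersection form, from Beheshti's Theorem 2.1 and then used as a black box in the proof of Theorem \ref{main}, so there is no in-paper argument to match your sketch against; judged on its own terms, your proposal has a genuine gap exactly where you suspect it does.

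Your setup is fine: pulling back the universal family along a rational curve through a general point of $Y$ gives a ruled surface $\pi\colon S\to\p^1$ with evaluation $f\colon S\to X$, a general fiber $F$ has $N_{F/S}\cong\mathcal{O}_{\p^1}$, its image under $df$ (after saturation) is a sub-line-bundle of $N_{l/X}$ of degree $\geq 0$, and $\deg N_{l/X}=n-1-d\leq 0$. But these facts carry no contradiction by themselves: a bundle of non-positive degree on $\p^1$ can perfectly well contain a trivial sub-line-bundle, e.g. $\mathcal{O}\oplus\mathcal{O}(-1)^{\oplus k}$. More tellingly, nothing you have used depends on the rationality of the base curve: a fiber of a $\p^1$-bundle over \emph{any} smooth curve has trivial normal bundle, so your conclusions hold verbatim for an arbitrary positive-dimensional family of lines sweeping out a surface inside the divisor. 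Since the theorem does not exclude such families --- it asserts only that they are not \emph{uniruled} --- any correct proof must use the rationality (indeed, the freeness) of the curve in $Y$ in an essential quantitative way, and your argument invokes uniruledness only to produce the curve. Finally, the proposed endgame is logically off: deforming $S$ so that its lines sweep out more than a divisor does not contradict the hypothesis, which constrains only the lines parameterized by $Y$; deformations of $l$ in $X$ need not correspond to points of $Y$, and once they leave $Y$ the locus they sweep is irrelevant. The actual content of Beheshti's theorem is precisely the step you defer: one must use that deformations coming from $Y$ are constrained by the divisor being swept (so the image of $T_{[l]}Y$ in $H^0(l,N_{l/X})$ lands in sections of a constrained subsheaf) and play this, via the bound $d\geq n-1$, against the positivity that a free rational curve through a general point of (a resolution of) $Y$ forces on the restriction of the tangent sheaf of $Y$. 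As written, the proposal is the standard setup plus an acknowledged hole where that argument should go.
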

\begin{thm}[Segre, Rogora]\label{Lines}
Let $S$ be a $k$-dimensional subvariety of $\p^n$ such that $n-k\geq 3$. Then $S$ has an at most $2k-2$-parameter family of lines.
\begin{enumerate}
    \item If $S$ has a $(2k-2)$-parameter family of lines, then $S\cong \p^k$.
    \item If $S$ has a $(2k-3)$-parameter family of lines, then $S$ is either a quadric or a $1$-parameter family of $\p^{k-1}$.
    \item If $S$ has a $(2k-4)$-parameter family of lines, then $S$ is either a $1$-parameter family of $(k-1)$-dimensional quadrics, a $2$-parameter family of $\p^{k-2}$, or the intersection of $6-k$ hyperplanes with the Grassmannian $G(1,4)\subset \p^9$ in its Pl\"ucker embedding.
\end{enumerate}
\end{thm}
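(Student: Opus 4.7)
The plan is to first establish the upper bound $\dim \F(S) \leq 2k-2$ by a standard incidence argument, and then treat the three equality cases in order of decreasing number of lines. The key construction is the incidence variety $I = \{(p,l) : p \in l \subset S\}$ with the two projections $\pi_1 \colon I \to S$ and $\pi_2 \colon I \to \F(S)$. The second projection is a $\p^1$-bundle, so $\dim I = \dim \F(S) + 1$. For a general $p$ in the image of $\pi_1$, the fiber $\Sigma_p := \pi_1^{-1}(p)$ parameterizes lines through $p$ lying in $S$; each such line has tangent direction in $T_p S$, so $\Sigma_p$ embeds into $\p(T_p S) \cong \p^{k-1}$ when $p$ is smooth on $S$. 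Hence $\dim \Sigma_p \leq k-1$, and the dimension count $\dim \F(S) + 1 = \dim \pi_1(I) + \dim \Sigma_p \leq k + (k-1)$ yields the bound (the case where $\pi_1$ is not dominant is strictly easier, giving $\dim \F(S) \leq 2k-3$).

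For case (1), the equality $\dim \F(S) = 2k-2$ forces $\pi_1$ dominant and $\Sigma_p = \p(T_p S)$ for general $p$, meaning every tangent direction at $p$ contains a line in $S$. I would conclude $S \cong \p^k$ by the standard fact that a variety whose second fundamental form vanishes on an open set is a linear subspace, or equivalently by iterating a planarity argument: the ruled surface swept out by a one-parameter family in $\Sigma_p$ interpolating two lines $l, l'$ through $p$ must lie in the plane $\langle l, l'\rangle$, forcing first $\p^2 \subset S$ and, by induction on pairs of lines, $\p^k \subset S$. For case (2), $\Sigma_p$ is a hypersurface in $\p^{k-1}$ cut out by the quadratic components of the second fundamental form of $S$ at $p$, so it must be either an irreducible quadric (in which case $S$ itself is identified with a quadric via a Gauss map argument) or a hyperplane (in which case the lines through $p$ sweep out a linear $\p^{k-1}\subset S$, and $S$ globally fibers as a one-parameter family of such subspaces).

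The main obstacle, and the reason Rogora's proof of case (3) came long after Segre's work on (1) and (2), is that $\Sigma_p$ of codimension two in $\p^{k-1}$ admits a much richer zoo of possibilities, and no single invariant of the second fundamental form pins down $S$. The strategy I would pursue is to classify the possible base loci of the pencils of quadrics cutting out $\Sigma_p$---yielding either two linear rulings by $\p^{k-2}$'s, or a single ruling by $(k-1)$-dimensional quadrics, or the incidence configuration of lines through a point of $G(1,4)$---and then promote this pointwise classification to a global description by analyzing how $\Sigma_p$ deforms along the focal variety of the universal family of lines over $\F(S)$. The sporadic hyperplane sections of $G(1,4) \subset \p^9$ are singled out by the well-known fact that $G(1,4)$ carries two distinct one-parameter families of lines through each point (the two Schubert rulings), and the hard step is verifying this incidence structure is the only remaining possibility once the linear and quadric rulings are excluded.
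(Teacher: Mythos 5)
First, note that the paper does not prove this statement at all: it is imported from the literature, with parts (1) and (2) attributed to Segre and part (3) to Rogora [Theorem 2], and it is used as a black box in the proof of the main theorem. So there is no proof in the paper to compare yours against; the only question is whether your outline would stand on its own as a proof, and it would not. The upper bound $\dim\F(S)\leq 2k-2$ via the incidence variety and the fibers $\Sigma_p\subset\p(T_pS)$ is fine (this part is genuinely standard), and your sketch of case (1) can be made to work, for instance by observing that if $\Sigma_p=\p(T_pS)$ at a general point then the cone of lines through $p$ is $k$-dimensional and hence equals $S$, forcing $S$ to be linear. But already in case (2) there is a gap: a line through $p$ in direction $v$ imposes the vanishing at $v$ of the second \emph{and all higher} fundamental forms (equivalently, of every graded piece of the local equations along that direction), so $\Sigma_p$ is only \emph{contained} in the base locus of the quadrics of the second fundamental form. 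Your claim that $\Sigma_p$ ``is cut out'' by those quadrics, and the ensuing dichotomy (irreducible quadric versus hyperplane) together with the Gauss-map identification of $S$ with a quadric and the globalization into a fibration by $\p^{k-1}$'s, are asserted rather than proved; the passage from the structure of $\Sigma_p$ at a general point to the global structure of $S$ is exactly the substance of Segre's argument and cannot be waved through.

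The gap is most serious in case (3), which is the genuinely hard content (Rogora's theorem) and the part the paper actually needs to recognize the linear sections of $G(1,4)\subset\p^9$ in the $d-s=5$ case. There your text is a research plan, not a proof: you propose to classify base loci of pencils of quadrics containing $\Sigma_p$ and then ``promote this pointwise classification to a global description,'' and you explicitly defer ``the hard step'' of showing that, once the linear and quadric rulings are excluded, the $G(1,4)$ incidence structure is the only remaining possibility. Pencils of quadrics in $\p^{k-1}$ have many base-locus types (including non-reduced and reducible ones), higher fundamental forms again intervene, and the focal-variety deformation argument is not carried out even in outline. As it stands, the proposal reproves the easy bound and gestures at the classification; to be honest about its status you should either cite Segre and Rogora for (1)--(3), as the paper does, or supply the missing global arguments, which constitute essentially the entire difficulty of the theorem.
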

With these tools, we will now prove Theorem \ref{main}.
\begin{proof}[Proof of Theorem \ref{main}]
By Lemma \ref{n=d}, we can reduce to the case $d=n$. First, we deal with the case $d-s=2$. In this case, $\dim(X)=2$, so by Lemma \ref{cover}, the subvariety swept out by lines on $X$ has dimension at most one. Therefore, there are only finitely many lines on $X$, so $\dim \F(X)=0$ as expected.
\par Next, suppose that $d-s=3$. We have $\dim(X)=3$, so the dimension of the subvariety swept out by lines on $X$ is at most $2$. Any two dimensional variety with a $2$-parameter family of lines is isomorphic to $\p^2$ by Theorem \ref{Lines}, but because $X$ is smooth this is impossible by the Lefschetz hyperplane theorem. Therefore, $\dim \F(X)=1$ as expected.
\par Now suppose that $d-s=4$. Then $\dim(X)=4$, and the variety $\Sigma$ swept out by lines on $X$ is of dimension at most $3$ by Lemma \ref{cover}. Because the expected dimension of $\F(X)$ is $2$, we can assume by Theorem \ref{Lines} that $\dim(\Sigma)=3$, as a $2$-dimensional variety contains at most a $2$-parameter family of lines. Also by Theorem \ref{Lines}, we know that $\Sigma$ can have at most a $4$-parameter family of lines. If it has a $4$-parameter family of lines, then $\Sigma$ is isomorphic to $\p^3$ by Theorem \ref{Lines}, which is impossible by the Lefschetz hyperplane theorem. If it has a $3$-parameter family of lines, then it is either a quadric threefold or a $1$-parameter family of planes. The former contradicts the Lefschetz hyperplane theorem. For the latter case, note that the union of the Fano variety of lines on the planes which sweep out $\Sigma$ form a component of the Fano variety of lines on $\Sigma$. This component is uniruled, which contradicts Theorem \ref{uniruled}.
\par Finally, suppose that $d-s=5$, so that $\dim(X)=5$. By Lemma \ref{cover}, the dimension of the variety $\Sigma$ swept out by lines on $X$ is at most $4$. If $\Sigma$ is a threefold, it can have at most a $4$-parameter family of lines by Theorem \ref{Lines}. If it does have a $4$-parameter family of lines, then $\Sigma\cong \p^3$ by Theorem \ref{Lines}, which is impossible by the Lefschetz hyperplane theorem. So we can assume that $\Sigma$ is a fourfold. It can have at most a $6$-parameter family of lines on it. Again by the Lefschetz hyperplane theorem, $X$ does not contain a $\p^4$, so $\Sigma$ cannot have a $6$-parameter family of lines by Theorem \ref{Lines}. The Lefschetz hyperplane theorem also rules out the possibility that $\Sigma$ has a $5$-parameter family of lines. Indeed, $X$ cannot contain a quadric fourfold, which is the remaining possibility from Theorem \ref{Lines} if $\Sigma$ has a $5$-parameter family of lines, as we have already ruled out the case of $X$ containing a $\p^3$. Finally, we exclude the case where $\Sigma$ has a $4$-parameter family of lines. By Theorem \ref{Lines}, this situation occurs when $\Sigma$ contains a $1$-parameter family of quadric threefolds, a $2$-parameter family of planes, or the intersection of $2$ hyperplanes with the Grassmannian $G(1,4)$ in its Pl\"ucker embedding. The Lefschetz hyperplane theorem excludes the possibility of $X$ containing a quadric threefold or a linear section of the Grassmannian $G(1,4)$ in its Pl\"ucker embedding, which is of degree five. The case in which $\Sigma$ is a $2$-parameter family of planes cannot occur by Theorem \ref{uniruled}. Indeed, for dimension reasons, the union of the Fano variety of lines in the planes which sweep out $\Sigma$ form a component of the Fano variety of lines on $\Sigma$, and hence on $X$. This component is uniruled.
\end{proof}
\end{section}
\bibliographystyle{amsplain}
\bibliography{lines}

\end{document}